\numberwithin{equation}{section}
\numberwithin{figure}{section}
\theoremstyle{plain}
\newtheorem{thm}{\protect\theoremname}[section]
\theoremstyle{definition}
\newtheorem{defn}[thm]{\protect\definitionname}
\theoremstyle{definition}
\newtheorem{problem}[thm]{\protect\problemname}
\theoremstyle{plain}
\newtheorem{lem}[thm]{\protect\lemmaname}
\theoremstyle{remark}
\newtheorem{claim}[thm]{\protect\claimname}
\theoremstyle{plain}
\theoremstyle{remark}
\newtheorem*{acknowledgement*}{\protect\acknowledgementname}
\newcommand{\lmfrac}[2]{\mbox{\small$\displaystyle\frac{#1}{#2}$}}   
\newcommand{\tmfrac}[2]{\mbox{\large$\frac{#1}{#2}$}}
\def\mylist#1 {\ifx!#1\else\makebox[4em][r]{#1} \expandafter\mylist\fi}
\def\R{\mathbb{R}}
\def\C{\mathbb{C}}
\providecommand{\acknowledgementname}{Acknowledgement}
\providecommand{\claimname}{Claim}
\providecommand{\definitionname}{Definition}
\providecommand{\lemmaname}{Lemma}
\providecommand{\problemname}{Problem}
\providecommand{\propositionname}{Proposition}
\providecommand{\theoremname}{Theorem}
\begin{document}
\title{When does the Table Theorem imply a solution to the Square Peg Problem?}
\author{Stefan Friedl and Kenan Ince}
\begin{abstract}
We will explain the relationship between one of the most beautiful theorems in topology, namely Fenn's Table Theorem, and one of the most famous open problems in topology, namely the Square Peg Problem. 
\end{abstract}

\maketitle

\section{Introduction}

To introduce the Square Peg Problem, we need the following definition:
\begin{defn}
The image of an injective continuous map $S^{1}\to\mathbb{R}^{2}$ is called
a \emph{Jordan curve}. Given a Jordan curve $C$, we say that a rectangle
$R$ is \emph{inscribed in $C$ }if all vertices of $R$ lie on $C$.
\end{defn}

The following problem was first stated by Otto Toeplitz \cite{To1911} in 1911. 

\begin{problem}\textbf{\label{square-peg}(Square Peg Problem)}
 Does every Jordan curve admit an inscribed square of side length $>0$?
\end{problem}

This problem, and interesting variations thereof, have fascinated mathematicians ever since it was formulated. Many partial results have been obtained.
For example it is known that it is possible to inscribe a rectangle in every Jordan
curve \cite{Me1982}, and Lev Schnirelmann \cite{Sc1944} showed it is always possible to inscribe
a square if the Jordan curve is $C^{2}$, but the general case of
the problem is open. A discussion of the question and its history
is given in  \cite{Ma2014} and \cite{GL2021}.

The following theorem was first formulated and proven by Roger Fenn \cite{Fe1970}
in $1970$. (An alternative write up of the proof is given in \cite[Chapter~133.1]{Fr2023}.)

\begin{thm}\textbf{\emph{(Table Theorem)}}
\label{thm:Table-Theorem} Let $D\subset \R^2$ be a compact convex non-empty subset.
 Let $f\colon \mathbb{R}^{2}\to\mathbb{R}$ be
a continuous map so that $f(x)\geq0$ for all $x\in D$ and such that $f(x)=0$
for all $x\notin D$. Let $s>0$ be a real number. Then there exist
four points $a_{1},a_{2},a_{3},a_{4}\in\mathbb{R}^2$ with the following
properties:
\begin{enumerate}
\item the points form a square of side length $s$,
\item the center of the square lies in $D$,
\item we have $f(a_{1})=f(a_{2})=f(a_{3})=f(a_{4})$.
\end{enumerate}
\end{thm}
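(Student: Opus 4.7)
\emph{Setup and reformulation.} I would parametrize candidate squares of side $s$ by $(c,\theta) \in \mathbb{R}^2 \times (\mathbb{R}/(\pi/2)\mathbb{Z})$, where $c$ is the center and the four vertices
\[
a_i(c,\theta) \;=\; c + \tfrac{s}{\sqrt 2}\bigl(\cos(\theta + \tfrac{\pi}{4} + (i-1)\tfrac{\pi}{2}),\ \sin(\theta + \tfrac{\pi}{4} + (i-1)\tfrac{\pi}{2})\bigr)
\]
are labelled so that $\theta \mapsto \theta + \pi/2$ cyclically permutes them. Writing $g_i := f\circ a_i$ and $V := \{v\in\mathbb{R}^4 : \sum_j v_j = 0\}$, the balanced-square condition becomes the vanishing of the $\mathbb{Z}/4$-equivariant continuous map $\Phi(c,\theta) := (g_i(c,\theta) - \bar g(c,\theta))_{i=1}^4 \in V$, where $\bar g := \tfrac14 \sum_j g_j$. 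A crucial initial observation is that at any zero of $\Phi$ with common value $\alpha > 0$ every vertex lies in $\mathrm{supp}(f) \subseteq D$, so the centroid $c = \tfrac14\sum_i a_i$ automatically lies in the convex set $D$; it therefore suffices to produce a \emph{non-trivial} zero of $\Phi$.

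\emph{Topological argument.} My plan is a degree/cobordism argument via deformation. First approximate $f$ by smooth functions, and simultaneously deform $f$ through a continuous family $\{f_t\}_{t\in[0,1]}$ with $f_0 = f$ and $f_1$ a radially symmetric bump supported strictly inside $D$; for $f_1$, every axis-aligned square centered at the bump's centre of symmetry gives a manifest non-trivial zero of $\Phi_1$. After a small generic perturbation, the total zero set $\bigcup_{t\in[0,1]} \Phi_t^{-1}(0)$, intersected with a sufficiently large compact ball $B$ containing the $s$-neighborhood of $D$, becomes a compact $1$-manifold in $B \times (\mathbb{R}/(\pi/2)\mathbb{Z}) \times [0,1]$ whose boundary meets the $t=1$ slice in the manifest non-trivial solution; organizing the count by $\mathbb{Z}/4$-orbits then forces a non-trivial boundary component at $t=0$. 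The theorem for the original non-smooth $f$ is recovered by a compactness/limit argument over the smoothing.

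\emph{Main obstacle.} The chief subtlety is that $\Phi \equiv 0$ wherever all four vertices lie outside $D$, so $\Phi^{-1}(0)$ is unbounded and the cobordism must be carefully localized; the ``trivial'' zeros on $\partial B$ and on the large-$|c|$ end of each slice have to be identified and discounted from the topological count, and the non-trivial zero of $\Phi_1$ must be shown to give a homologically non-trivial contribution relative to them. The $\mathbb{Z}/4$-equivariance is essential here: for an ordinary map from a $3$-manifold to a $3$-dimensional vector space there is in general no obstruction to being zero-free, but equivariant obstruction theory does produce such an obstruction. A secondary technical point is that transversality must be arranged compatibly with the $\mathbb{Z}/4$-action, which restricts the allowable perturbations and forces one to work with equivariant smoothings of $f$.
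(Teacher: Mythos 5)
The paper does not prove the Table Theorem; it is quoted as a known result, with a citation to Fenn's original paper \cite{Fe1970} and to the write-up in \cite{Fr2023}. So there is no ``paper's own proof'' to compare against, and I am assessing your sketch on its own terms. Your setup and the reduction (a zero of $\Phi$ with strictly positive common value automatically has its center in $D$ by convexity) are sensible, but the degree/cobordism argument as described has genuine gaps, not merely unfinished technicalities.

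The most serious problem is the model $f_1$. A radially symmetric bump at a point $p$ is too symmetric: every square centered at $p$, at every angle $\theta$, is a zero of $\Phi_1$, so the manifest solution set $\{p\}\times S^1$ is one-dimensional, not a transverse zero, and cannot serve as the $t=1$ boundary component of a compact one-dimensional cobordism. If you break the symmetry by a further perturbation to restore transversality, you lose the ``manifest'' solution and are left with exactly the hard task of computing the resulting $\mathbb{Z}/4$-equivariant signed count and showing it is non-zero; that is the crux of the whole theorem and the sketch does not address it. Second, the non-compactness you flag as a secondary point is in fact the central one: $\Phi_t$ vanishes identically on an open unbounded region (all four vertices outside $D$), so the zero set has non-empty interior, the usual transversality/cobordism formalism does not apply, and there is no well-defined count to preserve until you explicitly isolate the relevant zeros, for example by first perturbing $f$ to be strictly positive on $\operatorname{int}(D)$ and working over a compact subdomain on whose boundary $\Phi$ is nowhere zero. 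Third, the claim that the $\mathbb{Z}/4$-equivariance is ``essential'' because a map from a $3$-manifold to $\mathbb{R}^3$ carries no obstruction to being zero-free is misleading: the ordinary mapping degree, relative to a boundary on which the map is non-vanishing, already provides such an obstruction; the actual difficulty is that $\Phi$ vanishes on the boundary of every natural domain, and equivariance alone does not cure that. For context, Fenn's own argument is of a different character --- roughly, a continuity/intermediate-value argument applied to a balance point chosen for each angle, exploiting that a quarter turn cyclically permutes the vertices --- rather than a parametrized cobordism over a homotopy of $f$.
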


\begin{figure}[h]
\begin{center}
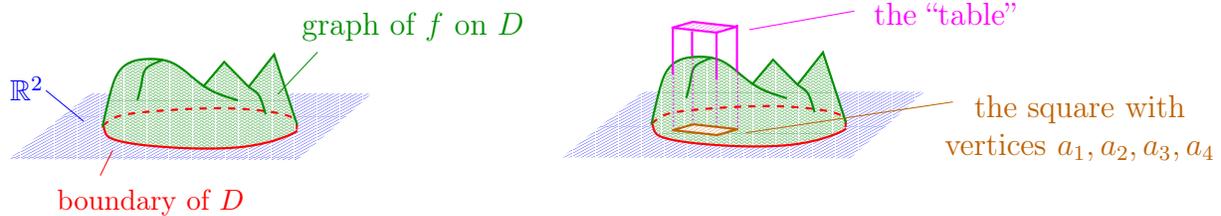 
\caption{Illustration of the Table Theorem.}
\end{center}
\end{figure}

In other words, the Table Theorem says that any given square table
can be placed on the ``ground'' defined by the graph of $f$ such
that all four legs of the table lie on the ground, the table is horizontal,
and the center of the tabletop lies in $D$.

It has been known for a long time that the Table Theorem~\ref{thm:Table-Theorem}
has implications for the Square Peg Problem. It was stated somewhat optimistically in \cite{Ma2014,Ta2017} that the Table Theorem~\ref{thm:Table-Theorem} implies the 
Square Peg Problem for convex Jordan curves. But to us it seems like that story is more complicated since, as we will see in Section~\ref{section:tt-trivial-solutions},  in many cases the Table Theorem has trivial solutions which do not tell us anything interesting. This leads us to the following definition.

\begin{defn}
\label{def:obtuse}We say that a subset $D$ of $\mathbb{R}^{2}=\mathbb{C}$
is \emph{obtuse }if, for any $x\in\partial D$, there exists a $v\in\mathbb{R}^{2}\setminus\{(0,0)\}=\mathbb{C}\setminus\{0\}$
and an angle $\theta>\frac{\pi}{2}$ such that
\[
T_{v,\theta}(x):=\{x+r\cdot  e^{i\phi}\cdot v:\phi\in[0,\theta]\text{ and }r\in[0,1]\}\subset D.
\]
\end{defn}


\begin{figure}[h]
\begin{center}
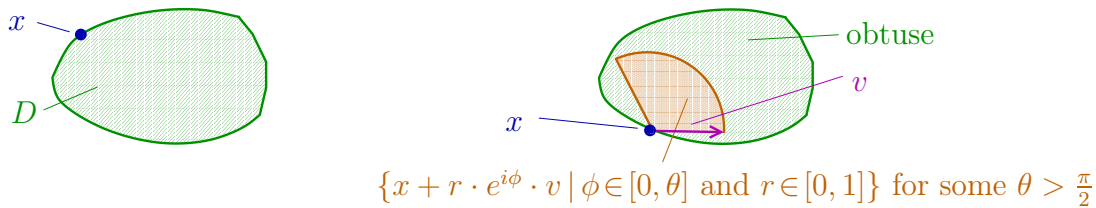 
\caption{\label{fig:An-obtuse-set}An obtuse set $D$ together with a picture
of $T_{v,\theta}(x)$.}
\end{center}
\end{figure}

In this short note we will show 
in the Main Lemma~\ref{lem:main-lemma} that the Table Theorem~\ref{thm:Table-Theorem} has a non-trivial solution if and only if $D$ is obtuse. 
We will use the Table Theorem~\ref{thm:Table-Theorem} to give a new proof for the following theorem.

\begin{thm}\textbf{\emph{(Main Theorem)}}
\label{thm:square-peg-for-Jordan-curves}Let $J$ be a Jordan curve
which is the boundary of a compact convex subset $D\subset\mathbb{R}^{2}$.
If $D$ is obtuse, then $J$ admits an inscribed square.
\end{thm}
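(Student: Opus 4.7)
The plan is to apply the Table Theorem~\ref{thm:Table-Theorem} to the distance-to-boundary function on $D$ and to promote the non-trivial solution supplied by the Main Lemma~\ref{lem:main-lemma} to an inscribed square in $J=\partial D$. Concretely, I would set $f\colon\R^2\to\R$ by $f(x)=\operatorname{dist}(x,\partial D)$ for $x\in D$ and $f(x)=0$ for $x\notin D$. Then $f$ is continuous, non-negative on $D$, vanishes outside $D$, and its zero set inside $D$ is exactly $\partial D$. For each $s>0$ the Table Theorem provides a square of side $s$ with center in $D$ whose four vertices $a_1,\ldots,a_4$ share a common $f$-value $c\ge 0$.

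The trivial solutions to be avoided are those in which all four vertices lie outside $\overline D$: then $f(a_1)=\cdots=f(a_4)=0$ holds for the uninteresting reason that $f$ vanishes off $D$. Any non-trivial solution has at least one vertex in $\overline D$, and since all four $f$-values coincide, a short case analysis splits non-trivial solutions into exactly two types: either $c>0$ and all four vertices lie in $\interior{D}$, or $c=0$ and each vertex lies either on $\partial D$ or outside $D$. The inscribed square we seek is precisely a non-trivial solution of the latter type all of whose vertices lie on $\partial D$.

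To produce one I would let $s$ vary and combine the Main Lemma with a limiting argument. For small $s$ the natural candidate solutions are interior ones centered near a point of $D$ at which $f$ attains its maximum, giving $c(s)>0$; on the other hand, convexity of $D$ forces any square with all four vertices in $\interior{D}$ to lie inside $D$, so purely interior non-trivial solutions cannot exist once $s$ exceeds the side length of a maximal square inscribed in $D$. Let $s^\ast$ denote the supremum of those scales admitting an interior non-trivial solution. Taking a sequence $s_n\uparrow s^\ast$ of interior solutions and using compactness of $\overline D$ to pass to a subsequential limit produces a square of side $s^\ast$ all of whose vertices lie in $\overline D$, with common value $c^\ast=\lim c(s_n)\ge 0$.

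The main obstacle — and the step I expect to be most delicate — is ruling out $c^\ast>0$, as well as verifying in the first place that interior non-trivial solutions do exist on some interval of small $s$. For the former, if $c^\ast>0$ the limiting square would be strictly interior to $D$, and I would have to contradict the maximality of $s^\ast$ by producing an interior non-trivial solution at a slightly larger scale, most plausibly via a perturbation argument that uses obtuseness (through the Main Lemma) to keep the enlarged solution both non-trivial and interior. Once $c^\ast=0$ is established, all four vertices of the limiting configuration lie in $\overline D\cap f^{-1}(0)=\partial D$, yielding the inscribed square in $J$ and completing the proof.
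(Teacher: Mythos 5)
Your opening steps are sound, and your taxonomy of Table Theorem solutions for the function $f=\operatorname{dist}(\cdot,\partial D)$ is essentially correct: for $s\le d$ (with $D$ being $d$-nontrivial by the Main Lemma) the guaranteed solution has at least one vertex in $\operatorname{int}(D)$, and since $f$ vanishes outside $\operatorname{int}(D)$ this forces the common value $c$ to be strictly positive, hence all four vertices to lie in $\operatorname{int}(D)$. So your worry about whether ``interior non-trivial solutions'' exist at small $s$ is not actually a gap. The real gap lies further on, and you have correctly identified it but not closed it.

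The fundamental difficulty is the choice of $f$. The level sets $f^{-1}(c)$ of the distance-to-boundary function are the boundaries of the inner parallel bodies $\{x\in D:\operatorname{dist}(x,\partial D)\ge c\}$, and for a general convex $D$ these are \emph{not} homothetic to $\partial D$ (already for a non-square rectangle the inner parallel bodies change shape and eventually degenerate to a segment). So a square found on a level set $f^{-1}(c)$ with $c>0$ carries no information about inscribed squares in $\partial D$ itself. You are therefore forced into the limiting argument with $s_n\uparrow s^\ast$ and the claim $c^\ast=0$, which you flag as the delicate step but never actually establish. Passing to a subsequential limit of Table Theorem solutions is legitimate, but there is no structural control on which solution the Table Theorem produces at each scale, so there is no reason the limiting common value must drop to $0$ rather than stabilize at some positive $c^\ast$ on a strictly interior square; the proposed ``perturbation argument'' to rule this out would need to be spelled out, and it is not clear it can be, since the Table Theorem is a pure existence statement with no continuity in $s$.

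The paper avoids this entirely by choosing a different $f$. After translating $0$ into $\operatorname{int}(D)$, set $\rho(x)=\sup\{\|rx\|: r>0,\ rx\in D\}$ and $f(x)=1-\|x\|/\rho(x)$ on $D\setminus\{0\}$ (with $f(0)=1$ and $f=0$ off $D$). The point of this radial normalization is precisely that the level set $f^{-1}(y)\cap D$ equals the scaled boundary $(1-y)\cdot\partial D$ for every $y\in(0,1)$. Then a single application of the Table Theorem at scale $s=d$ gives a square with common value $y>0$ (by $d$-nontriviality, exactly your argument), its vertices lie on $(1-y)\partial D$, and multiplying by $\tfrac{1}{1-y}$ yields the inscribed square on $\partial D$ immediately. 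No limiting or perturbation argument is needed, and the homothety invariance of ``being a square'' is what makes the rescaling work. In short: your skeleton is right, but you need to replace $\operatorname{dist}(\cdot,\partial D)$ with a function whose level sets are homothets of $\partial D$; otherwise the crucial last step has a genuine hole.
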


Note though that the Square Peg Problem \ref{square-peg} for convex curves has been known for a long time;
in fact, it was proved by K.\ Zindler \cite{Zi1921} and C.\ M.\ Christensen \cite{Chr1950}.


\section{Preliminaries}\label{section:tt-trivial-solutions}
We return to the context of the Table Theorem.
Note that given a compact convex non-empty subset  $D\subset \R^2$,  for $s$ very large, we can simply place the legs of the
table outside of $D$ (in fact, by continuity of $f$, outside the
interior of $D$) while keeping the center of the table in $D$, since
then $f(a_{1})=f(a_{2})=f(a_{3})=f(a_{4})=0$. Motivated by such a
``trivial solution'' to the Table Theorem, we make the following
definition:

\begin{defn}
Let $D$ be a subset of $\R^2$ and let $s\in \R$.  We say $D$ is \emph{$s$-trivial }if
there exists a square in $\mathbb{R}^{2}$ with side length $t\in(0,s]$ such
that the vertices of the square lie outside the interior of $D$ while
the center of the square lies in $D$. Otherwise, we say that $D$
is \emph{$s$-nontrivial.} In other words, $D$ is $s$-nontrivial if, for all squares with side length $t\in(0,s]$ with center in $D$, at least one vertex of the square lies in the interior of $D$.
\end{defn}

Note that, by our definition, if $D$ is $s$-trivial for some $s$, then certainly $D$ is $s'$-trivial for all $s'>s$. Moreover, every non-empty compact set $D$ is $s$-trivial for all sufficiently large $s$; for example, one may take $s\geq\operatorname{diam}(D)$ and center the trivial square of side length $s$ at any point in $D$.

\begin{figure}[h]
\begin{center}
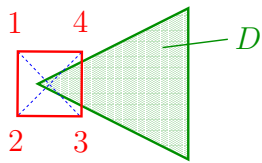 
\caption{Example of a subset $D\subset \C$ that is $s$-trival for all $s>0$.}
\end{center}
\end{figure}

The following lemma is the main technical result we prove in this note.

\begin{lem}\textbf{\emph{(Main Lemma)}}\label{lem:main-lemma}
 Let $D\subset\mathbb{R}^{2}$ be a compact, convex, non-empty set. Then $D$
is obtuse if and only if $D$ is $s$-nontrivial for some $s>0$.
\end{lem}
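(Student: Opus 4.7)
The plan is to characterize obtuseness via tangent cones and then prove each implication. For a boundary point $x$ of a convex set $D$, let $K_x$ denote the tangent cone at $x$, i.e., the smallest closed convex cone containing $D-x$. I claim $D$ is obtuse at $x$ if and only if $K_x$ has angular width strictly greater than $\pi/2$. One direction is immediate: any wedge $T_{v,\theta}(x)\subset D$ with $\theta>\pi/2$ forces $K_x$ to contain a cone of angular width $\theta$. Conversely, if $K_x$ has angular width $>\pi/2$, then convexity of $D$ lets one pick $v$ of small enough norm in the interior of $K_x$ and $\theta>\pi/2$ slightly less than this width so that $T_{v,\theta}(x)\subset D$.

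For the converse of the Main Lemma, I take the contrapositive: if $D$ is not obtuse, there exists $x\in\partial D$ with $K_x$ of angular width $\psi\leq\pi/2$. For any $t>0$, I place a square of side $t$ centered at $x$ with orientation chosen so that the four vertex directions all lie outside the angular range of $K_x$ or on its two boundary rays. This is possible because the complementary arc of $K_x$'s angular range has length $2\pi-\psi\geq 3\pi/2$, which accommodates four angles spaced at $\pi/2$. Vertices in directions strictly outside $K_x$ lie outside $x+K_x\supset D$, hence outside $D$; vertices whose direction equals a boundary ray of $K_x$ lie on a supporting line of $D$ at $x$, hence outside the interior of $D$. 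Since $x\in D$, this is a trivial square, so $D$ is $s$-trivial for every $s>0$.

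For the forward direction, I argue by contradiction: assume $D$ is obtuse but $s$-trivial for all $s>0$. Pick trivial squares $Q_n$ with sides $t_n\to 0$; passing to a subsequence, assume the centers $c_n\to c\in D$. If $c$ lies in the interior of $D$, then for large $n$ the entire square lies in the interior, contradicting triviality, so $c\in\partial D$. Let $W=T_{v,\theta}(c)\subset D$ with $\theta>\pi/2$ witness obtuseness at $c$, and pick a slightly shrunken nested wedge $W'=T_{v',\theta'}(c)$ with $v'=\frac{1}{2}e^{i\eta}v$ and $\theta'=\theta-2\eta$, choosing $\eta>0$ small enough that $\theta'>\pi/2$. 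Then $W'\setminus\{c\}$ sits strictly inside the open wedge of $W$, and in particular inside the interior of $D$.

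The crux is to propagate $W'$ to nearby centers. I claim that for $c'\in D$ sufficiently close to $c$, the set $T_{v',\theta'}(c')\setminus\{c'\}$ lies in the interior of $D$: the outer arc $\{c'+e^{i\phi}v':\phi\in[0,\theta']\}$ is a small translate of the corresponding arc at $c$, which is contained in the open wedge of $W$, and by compactness it remains in the interior of $D$ when $c'$ is close enough to $c$. Convexity of $D$ then places each half-open segment from $c'\in D$ to a point of this arc in the interior of $D$, covering all of $T_{v',\theta'}(c')\setminus\{c'\}$. Applying this with $c'=c_n$ for large $n$: the four vertices of $Q_n$ sit at positive distance $t_n/\sqrt{2}<|v'|$ from $c_n$ in four directions spaced at $\pi/2$, and since $\theta'>\pi/2$ any closed arc of length greater than $\pi/2$ must contain at least one of four equispaced angles. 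So at least one vertex lands in $T_{v',\theta'}(c_n)\setminus\{c_n\}$ and hence in the interior of $D$, contradicting triviality. The main obstacle is precisely this propagation step, and it is why one shrinks the original wedge at $c$ before translating: the outer boundary of $W'$ must lie in the interior, not merely in the closure, of $W$.
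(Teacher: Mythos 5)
Your proof is correct, but it takes a genuinely different route from the paper in both directions, and the comparison is worth spelling out.

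For the $(\Leftarrow)$ direction (non-obtuse implies $s$-trivial for all $s$), you work with the tangent cone $K_x$ and its angular width, while the paper works more concretely with the set $B=\{v\in S^1: \R_{>0}v\cap D\neq\emptyset\}$ of directions from $x$ into $D$, shows $B$ is connected, and bounds its arc length by $\pi/2$. These are essentially the same idea ($B$ is the angular range of $K_x$ minus possible endpoints), but the paper avoids invoking tangent-cone facts such as the equivalence of interior points of $\overline{\bigcup_{t>0}t(D-x)}$ with interior points of the union itself, which you need to justify your claim that angular width $>\pi/2$ implies an obtuse wedge inside $D$. Your phrasing that a vertex on a boundary ray of $K_x$ lies on a supporting line of $D$ is correct since the width is at most $\pi/2<\pi$, but the cleaner justification is simply that $D\subset x+K_x$, hence $\operatorname{int}(D)\subset\operatorname{int}(x+K_x)$, so a point on $\partial(x+K_x)$ cannot be interior to $D$. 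Both arguments quietly skip fully degenerate $D$ (a point or a segment), where $B$ is empty, a singleton, or two antipodal points; the paper handles the antipodal case explicitly but not the others.

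For the $(\Rightarrow)$ direction, the divergence is larger. The paper defines an explicit function $f_D(x)=\sup\{\|v\|:\exists\,\gamma>\pi/2,\ T_{v,\gamma}(x)\subset D\}$, proves a dedicated lemma that $f_D$ is lower semicontinuous, and invokes the extreme value theorem for l.s.c.\ functions on a compact set to extract a uniform positive bound $s=\min f_D>0$, yielding $s$-nontriviality with an explicit $s$. You instead argue by contradiction and sequential compactness: take trivial squares with side lengths $t_n\to 0$, pass to a convergent subsequence of centers $c_n\to c$, show $c\in\partial D$, then ``propagate'' a shrunken open wedge at $c$ to $c_n$ for large $n$ to force a vertex into $\operatorname{int}(D)$. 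The shrinking trick (replacing $W$ by $W'=T_{v',\theta'}(c)$ with $v'=\tfrac12 e^{i\eta}v$, $\theta'=\theta-2\eta$, while keeping $\theta'>\pi/2$) is exactly the right move, and your pigeonhole step (any closed arc of length $>\pi/2$ meets four equally spaced directions) is correct. The trade-off is that the paper's approach is more quantitative (it constructs an explicit nontriviality threshold, which is reused conceptually in Section~4), whereas your compactness argument is shorter and avoids having to prove lower semicontinuity, at the cost of being purely existential.

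One small point to tighten in your write-up: when you say the translated outer arc ``remains in the interior of $D$ when $c'$ is close enough to $c$,'' you should note you are using that the arc at $c$ is a compact subset of the open set $\operatorname{int}(D)$, hence has positive distance to $\partial D$, and that moving a point by less than this distance keeps it in the same component of $\R^2\setminus\partial D$, namely $\operatorname{int}(D)$. As written this is the right idea but deserves a sentence.
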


Thus, Lemma \ref{lem:main-lemma} is precisely the condition needed for the Table Theorem to imply a \emph{nontrivial} solution to the Square Peg Problem, in the sense that we do not simply make the square huge enough that all of its vertices lie outside of $D$.

The next section will be devoted to a proof of the Main Lemma~\ref{lem:main-lemma}. In Section
\ref{sec:nontriv-implies-square-peg}, we will show that the Main Lemma~\ref{lem:main-lemma} implies Theorem
\ref{thm:square-peg-for-Jordan-curves}.
\section{Proof of the Main Lemma~\ref{lem:main-lemma}}

\begin{defn}
Let $D\subset \R^2$ be compact and convex.
We consider the function
\[ \begin{array}{rcl} f_D\colon D&\to& \R_{\geq 0}\\
x&\mapsto  &\sup\{\|v\|:\mbox{ $v\in \R^2$ such that  there exists a $\gamma>\frac{\pi}{2}$ with $T_{v,\gamma}(x)\subset D$}\}.\end{array}\]
Note that we can always take $v=0$, i.e.\ we take the supremum over a non-empty set.
\end{defn}

\begin{lem}\label{claim:f-is-lower-semicontinuous}
Let $D\subset \R^2$ be compact and convex. The function $f_D\colon D\to \R$ defined
above is lower semicontinuous. That is, for any $x\in D$
and any $\epsilon>0$, there exists a $\delta>0$ such that, for all $y\in D$
with $\|x-y\|<\delta$, $f_D(y)>f_D(x)-\epsilon$.
\end{lem}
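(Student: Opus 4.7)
The plan is to use the convexity of $D$ together with the visual angle subtended by the outer circular arc of a near-optimal sector at $x$. I will argue that for $y$ close to $x$, the outer arc witnessing $f_D(x)$ still subtends at $y$ an angle exceeding $\pi/2$, and I will then assemble a sector at $y$ out of the chord segments $[y,p]$, which lie in $D$ by convexity.

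The case $f_D(x) = 0$ is immediate since $f_D \geq 0$, so I will fix $x \in D$ with $L := f_D(x) > 0$ and $\epsilon \in (0,L)$. By the definition of the supremum I will pick $v_0 \in \C\setminus\{0\}$ and $\gamma_0 > \pi/2$ with $T_{v_0,\gamma_0}(x) \subset D$ and $\|v_0\| > L - \epsilon/2$, and I set
\[
A := \{\, x + e^{i\phi} v_0 : \phi \in [0,\gamma_0]\,\} \subset D.
\]
Every point of $A$ lies at distance exactly $\|v_0\|$ from $x$, and for any $y \in D$ and $p \in A$ the segment $[y,p]$ lies in $D$ by convexity.

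The key auxiliary construction is the ``visual-angle'' map $\phi \mapsto F_y(\phi) := \arg\bigl(x + e^{i\phi}v_0 - y\bigr)$. At $y = x$ it is just $\phi \mapsto \phi + \arg v_0$, an isometric embedding onto an arc of length exactly $\gamma_0$. For $y$ in a small ball around $x$ (say, with $\|y-x\| < \|v_0\|/2$), the argument is well-defined and continuous in $(\phi,y)$, so $F_y \to F_x$ uniformly in $\phi$ as $y \to x$. A standard lifting argument then yields $\delta_1 > 0$ such that for all $y \in D$ with $\|y-x\| < \delta_1$, the image $F_y([0,\gamma_0])$ contains a closed interval $[\theta_0,\theta_0+\gamma']$ of length $\gamma' := \tfrac12(\gamma_0 + \pi/2) > \pi/2$.

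Given such a $y$, I will take $v'$ to be the vector with $\arg v' = \theta_0$ and $\|v'\| = \|v_0\| - \|y - x\|$ and verify $T_{v',\gamma'}(y) \subset D$. For each $\theta \in [\theta_0,\theta_0+\gamma']$, surjectivity produces $\phi(\theta) \in [0,\gamma_0]$ with $F_y(\phi(\theta)) = \theta$; the corresponding point $p(\theta) := x + e^{i\phi(\theta)} v_0 \in A$ then satisfies $\|p(\theta) - y\| \geq \|v_0\| - \|y-x\| = \|v'\|$ by the triangle inequality. Every point of $T_{v',\gamma'}(y)$ therefore lies on the segment $[y, p(\theta)] \subset D$ at distance at most $\|v'\|$ from $y$, hence in $D$. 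This gives $f_D(y) \geq \|v_0\| - \|y-x\|$, and taking $\delta := \min(\delta_1,\epsilon/2)$ then yields $f_D(y) > L - \epsilon$. I expect the only genuinely technical step to be the uniform-continuity/lifting argument producing $\delta_1$; this should be routine because the arc $A$ stays at uniform positive distance from every $y$ sufficiently close to $x$, so that the argument function is smooth along the curve $\phi \mapsto e^{i\phi}v_0 - (y-x)$.
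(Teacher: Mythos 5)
Your proof is correct and follows essentially the same route as the paper: pick a near-optimal sector $T_{v_0,\gamma_0}(x)\subset D$, consider the outer circular arc, show that for $y$ close to $x$ this arc still subtends an angle exceeding $\pi/2$ at $y$, and use convexity to fill out a sector at $y$ from the chords $[y,p]$. Where the paper obtains the angle estimate economically by examining only $\sphericalangle_{AyB}$ at the two arc endpoints (continuity of $y\mapsto\sphericalangle_{AyB}$ being immediate from the explicit $\arccos$ formula), you deploy the full visual-angle map $F_y$ together with a uniform-continuity/lifting argument — a heavier but perfectly sound tool for the same purpose. One genuine improvement on your side: you scale the radius down to $\|v'\|=\|v_0\|-\|y-x\|$, which ensures $\|v'\|\le\|p(\theta)-y\|$ uniformly over the arc and so makes the inclusion $T_{v',\gamma'}(y)\subset D$ airtight; the paper's choice $w=A-y$ at full length leaves the corresponding radial estimate slightly under-justified, since $\|A-y\|$ may exceed the distance from $y$ to points in the interior of the arc.
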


\begin{proof}
 Let $x\in D$ and $\epsilon>0$ be arbitrary. If $f_D(x)=0$, then there is nothing to show. Thus we can now assume that $f_D(x)>0$. By the definition
of $f_D$ as a supremum there exists
a non-zero vector $v\in\mathbb{R}^{2}$ such that $\|v\|>f_D(x)-\frac{\epsilon}{2}$
and  an angle $\gamma\in ( \frac{\pi}{2},\pi)$  with $T_{v,\gamma}(x)\subset D$.

It suffices to show that there exists $\delta>0$ such that, for all
$y\in D$ with $\|x-y\|<\delta$, we can find a $w\in\mathbb{R}^{2}$
with $\|w\|>f_D(x)-\epsilon$ and an angle $\beta\in (\frac{\pi}{2},\pi)$
with $T_{w,\beta}(y)\subset D$.

We set $A:=x+v$ and $B:=x+e^{i\gamma}\cdot v$. 
We consider the map 
\[ \begin{array}{rcl}g\colon D\setminus \{A,B\}&\to & [0,\pi]\\
y&\mapsto & \sphericalangle_{AyB}:=\arccos\bigg(\lmfrac{\langle A-y,B-y\rangle}{\|A-y\|\cdot \|B-y\|}\bigg).\end{array}\]
It is clear that $g$ is continuous.
Since $g(x)=\gamma>\frac{\pi}{2}$ we see that  there exists a $\mu>0$ such that $g(y)>\frac{\pi}{2}$ for all $y\in D$ with $\|x-y\|<\mu$. We set $\delta:=\operatorname{min}\{\mu,\frac{\epsilon}{2}\}$. 
We claim that $\delta$ has the desired property.

Let $y\in D$ with $\|x-y\|<\delta$. We claim that 
$w:=A-y$ and $\beta:=g(y)$ have the desired properties.
First note that it follows from  $\delta\leq \mu$
that  $\sphericalangle_{AyB}=g(y)>\frac{\pi}{2}$.  

By convexity of $D$ we know that for each $\varphi\in [0,\gamma]$
the segment from $y$ to  $z=x+e^{i\varphi}\cdot v$ is contained in $D$.
Furthermore note that  since $\delta<\frac{\epsilon}{2}$
we see that for each $z=x+e^{i\varphi}\cdot v$ we have $\|z-y\|\geq \|z-x\|-\|y-x\|>f_D(x)-\frac{\epsilon}{2}-\frac{\epsilon}{2}=f_D(x)-\epsilon$. 
It follows from this discussion that $T_{w,\beta}(y)\subset D$. 
\end{proof}

\begin{figure}[h]
\begin{center}
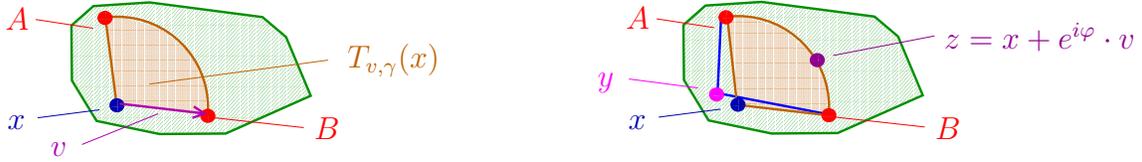 
\caption{Illustration for the proof of Lemma~\ref{claim:f-is-lower-semicontinuous}.}
\end{center}
\end{figure}

We now turn to the proof of  the Main Lemma~\ref{lem:main-lemma}.
Thus let $D\subset\mathbb{R}^{2}$ be a compact, convex, non-empty set.
We want to show the following:
\[ \mbox{$D$ is obtuse}\quad \Longleftrightarrow\quad 
\begin{array}{c}\mbox{$D$ is $s$-nontrivial for some $s>0$.}\end{array}\]
We prove the two directions separately.

\begin{proof}[Proof of the Main Lemma~\ref{lem:main-lemma} $\Longrightarrow$]
 Let $D\subset\mathbb{R}^{2}$ be a compact, convex, obtuse, non-empty set. We need to show 
that there exists a real number $s>0$ such that
$D$ is $s$-nontrivial.
 We continue with the above notation; 
in particular, we consider the function $f_D\colon D\to \R$.
Note that for any $x$ in the interior of $D$ we evidently have $f_D(x)>0$. Furthermore note that it follows from the fact that $D$ is obtuse that for any $x\in \partial D$ we also have $f_D(x)>0$. In summary we have shown that $f_D\colon D\to \R_{\geq 0}$ is non-zero everywhere.
Recall  that every lower semicontinuous function on a non-empty compact subset achieves a global minimum (see e.g.\ \cite[Theorem~II.10.1]{Cho1966}).
Thus it follows from  Lemma~\ref{claim:f-is-lower-semicontinuous} that  $f_D\colon D\to \R$ achieves a global
minimum. Denote
\[
s:= \min\{f_D(x):x\in D\}.
\]
As we mentioned above, $f_D$ is non-zero everywhere, thus we see that $s>0$. 

We claim that $D$ is $s$-nontrivial. 
Let $x\in D$. We need to show that every square centered at $x$ of side length $\leq s$ has least one vertex lying in $\operatorname{int}(D)$. 

By definition of $f_D$ and by definition of a supremum, we know that there exists
a $v\in \R^2$ with length $v>\frac{1}{\sqrt{2}}\cdot f_D(x)$ and a $\beta>\frac{\pi}{2}$ such that 
$T_{(v,\beta)}(x)\subset D$. Since $\beta>\frac{\pi}{2}$, it follows from elementary geometry that any square centered at $x$ with side length $\leq \sqrt{2}\cdot \|v\|$ contains a vertex in $\operatorname{int}(T_{(v,\beta)}(x))$. Since $\sqrt{2}\cdot \|v\|\geq  f_D(x)\geq s$, we see that any square centered at $x$ of side length $\leq s$ admits at least one vertex that lies in $\operatorname{int}(D)$.

\end{proof}

\begin{proof}[Proof of the Main Lemma~\ref{lem:main-lemma} $\Longleftarrow$]
Let $D\subset\mathbb{R}^{2}$ be a compact, convex set. 
We need to show that  if  $D$
is non-obtuse, then the subset $D$ is $s$-trivial for all $s>0$.

Thus we assume that $D$ is non-obtuse.  Recall that this means that there exists an $x\in \partial D$ such that for all 
$v\ne 0$ and all $\theta>\frac{\pi}{2}$ we have $T_{v,\theta}(x)\not\subset D$. 
After a translation we can assume, without loss of generality, that $x=0$.

Let $s>0$. We need to show that $D$ is $s$-trivial. It suffices to show that there exists a square in $\R^2$ with the following properties:
\begin{enumerate}
\item The center of the square is $x=0$.
\item The distance from $x$ to the four vertices equals $d:=\frac{1}{\sqrt{2}}s$. (Which implies that the side length is $s$.)
\item All four vertices lie outside of the interior of $D$. 
\end{enumerate}
We set 
\[ B:=\{ v\in S^1\,|\, \R_{>0}\cdot v\cap D\ne\emptyset\}.\] 
\begin{figure}[h]
\begin{center}
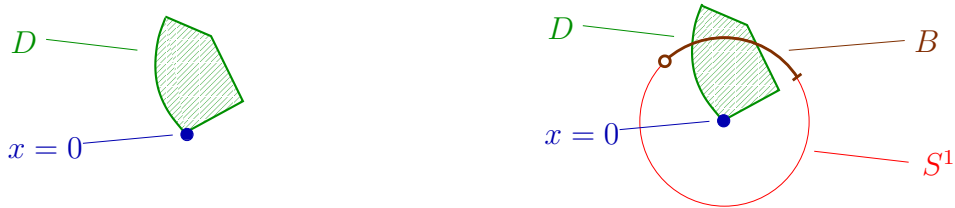 
\caption{\label{fig:non-obtuse-B} A depiction of a non-obtuse set $D$ together with the corresponding set $B$.}
\end{center}
\end{figure}

If $B$ consists of just two opposite points $v,-v$, then the desired square is given by the 
vertices $v\cdot e^{i(\pi/4+k\cdot \pi/2)}$, $k=0,1,2,3$. Thus in the following we will assume that $B$ does not just consist of two opposite points. 

\begin{claim}
The subset $B\subset S^1$ is path-connected.
\end{claim}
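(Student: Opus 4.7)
The plan is to realize $B$ as the intersection of a convex cone with $S^1$, and then build paths by normalizing convex combinations. Let $K := \R_{\geq 0} \cdot D = \{rd : r \geq 0,\, d \in D\}$. I would first verify that $K$ is convex: given $r_1 d_1,\, r_2 d_2 \in K$ and $\lambda \in [0,1]$, set $s := \lambda r_1 + (1-\lambda) r_2$; if $s > 0$ then
\[
\lambda r_1 d_1 + (1-\lambda) r_2 d_2 \;=\; s \cdot \Bigl(\tfrac{\lambda r_1}{s}\, d_1 + \tfrac{(1-\lambda) r_2}{s}\, d_2\Bigr) \;\in\; \R_{> 0}\cdot D \;\subset\; K
\]
by convexity of $D$, and if $s = 0$ the sum is $0 \in K$. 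Next, I would check that $B = K \cap S^1$: any $v \in B$ satisfies $rv \in D$ for some $r > 0$, so $v = (1/r)(rv) \in K$; conversely, every unit vector in $K$ has the form $d/\|d\|$ for some $d \in D\setminus\{0\}$, and then $\|d\|\cdot (d/\|d\|)= d \in D$ exhibits membership in $B$.

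With this identification in hand, for any two non-antipodal points $v_1, v_2 \in B$ the segment $\lambda v_1 + (1-\lambda) v_2$, $\lambda \in [0,1]$, lies in $K$ by convexity and is nowhere zero (since $v_1 \ne -v_2$). Normalizing produces the continuous map
\[
[0,1] \to B, \qquad \lambda \;\longmapsto\; \frac{\lambda v_1 + (1-\lambda) v_2}{\|\lambda v_1 + (1-\lambda) v_2\|},
\]
which is a path in $B$ joining $v_2$ to $v_1$.

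Finally, if $v_1, v_2 \in B$ happen to be antipodal, I would invoke the standing hypothesis that $B$ does not consist of just two opposite points: this supplies some $v_3 \in B$ with $v_3 \ne \pm v_1$, and then $v_3$ is non-antipodal to both $v_1$ and $v_2 = -v_1$. Concatenating the paths from $v_1$ to $v_3$ and from $v_3$ to $v_2$ constructed above yields a path in $B$ connecting $v_1$ and $v_2$. The only nontrivial step is the convexity of $K$, and I do not expect any real obstacle: the assumption that $B$ is not a pair of antipodal points is tailored precisely so that the convex-combination construction applies uniformly.
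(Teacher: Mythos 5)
Your argument is correct and takes essentially the same approach as the paper: both proofs connect non-antipodal points of $B$ by normalizing a segment of convex combinations, and both reduce the antipodal case to the non-antipodal one by routing through a third point of $B$, using the standing hypothesis that $B$ is not a pair of antipodal points. The only cosmetic difference is that you package the convexity step by introducing the cone $K = \R_{\geq 0}\cdot D$ and identifying $B = K\cap S^1$, whereas the paper works directly with scaled representatives $\tilde v, \tilde w \in D$ and the triangle $0,\tilde v,\tilde w$.
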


Let $v\ne w\in B$. We need to show that $v$ and $w$ are connected by a path in $B$.
First we consider the case that  $v\ne -w$. 
We pick $r>0,s>0$ such that $r\cdot v,s\cdot w\in D$. We set $\tilde{v}:=r\cdot v$ and $\tilde{w}:=s\cdot w$. 
We consider the triangle formed by $0,\tilde{v},\tilde{w}$. 
By convexity of $D$ this triangle is contained in $D$. This implies that 
all points of the form \[
\frac{ \tilde{v}\cdot (1-t)+\tilde{w}\cdot t}{\|\tilde{v}\cdot (1-t)+\tilde{w}\cdot t\|}
\] are contained in $B$. (The fact that $v\ne -w$ implies that 
$\tilde{v}\cdot (1-t)+\tilde{w}\cdot t\ne 0$ for all $t\in [0,1]$.)
Thus $v$ and $w$ are connected by a path in $B$. 

Now assume that $v=-w$. It follows from the discussion preceding the claim that there exists a $u\in B$ with $u\ne \{v,-w\}$. But by the above we can connect $v$ to $u$ by a path in $B$ and we can connect $u$ to $w$ by a path in $B$. Thus we can connect $v$ to $w$ by a path in $B$.
This concludes the proof of the claim.

Since $B\subset S^1$ is connected, there exists an interval $I\subset \R$ of length $\leq 2\pi$ such that 
\[ B=\{ e^{it}\,|\, t\in I\}.\]
We set $\varphi:=\operatorname{inf}(I)$ and  $\psi:=\operatorname{sup}(I)$.

\begin{claim}
We have $\psi-\varphi\leq \frac{\pi}{2}$.
\end{claim}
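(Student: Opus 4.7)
My plan is to argue by contradiction. Assume $\psi - \varphi > \frac{\pi}{2}$; the goal is to produce a non-zero vector $v$ and an angle $\theta > \frac{\pi}{2}$ with $T_{v,\theta}(0) \subset D$, contradicting the non-obtuseness of $D$ at the boundary point $0$.

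First, I would use the assumption $\psi - \varphi > \frac{\pi}{2}$ to pick $\varphi', \psi'$ with $\varphi < \varphi' < \psi' < \psi$ and $\frac{\pi}{2} < \psi' - \varphi' < \pi$. Since $\varphi', \psi'$ lie strictly between $\inf(I)$ and $\sup(I)$, they belong to the interval $I$, so $e^{i\varphi'}, e^{i\psi'} \in B$, and by the definition of $B$ there exist $r_1, r_2 > 0$ with $P_1 := r_1 e^{i\varphi'}$ and $P_2 := r_2 e^{i\psi'}$ in $D$. Convexity of $D$ then places the closed triangle $\Delta$ with vertices $0, P_1, P_2$ inside $D$.

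Next, for each angle $\alpha \in [\varphi', \psi']$ the ray from $0$ in direction $e^{i\alpha}$ meets the opposite edge $\overline{P_1 P_2}$ in a single point at distance $\tau(\alpha)$ from $0$. The inequality $\psi' - \varphi' < \pi$ is exactly what guarantees $0 \notin \overline{P_1 P_2}$, so $\tau$ is continuous and strictly positive on the compact interval $[\varphi', \psi']$ and achieves a positive minimum $\rho := \min_{\alpha} \tau(\alpha) > 0$.

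Finally, I would set $v := \rho\, e^{i\varphi'}$ and $\theta := \psi' - \varphi' > \frac{\pi}{2}$. Any point of $T_{v,\theta}(0)$ has the form $r\rho\, e^{i(\varphi' + \phi)}$ with $r \in [0,1]$ and $\phi \in [0,\theta]$, and since $r\rho \leq \rho \leq \tau(\varphi'+\phi)$, it lies on the segment from $0$ to $\tau(\varphi'+\phi)\, e^{i(\varphi'+\phi)} \in \overline{P_1 P_2}$, hence in $\Delta \subset D$. Therefore $T_{v,\theta}(0) \subset D$ with $\theta > \frac{\pi}{2}$, the desired contradiction. The only delicate point is ensuring $\psi' - \varphi' < \pi$ so that $0$ stays off the far edge of $\Delta$ (otherwise $\tau$ could vanish); since we only need $\psi' - \varphi'$ to slightly exceed $\frac{\pi}{2}$, this costs nothing.
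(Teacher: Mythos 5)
Your proof is correct and follows the same strategy as the paper's: choose $\varphi',\psi'$ strictly inside $I$ with $\psi'-\varphi'\in(\frac{\pi}{2},\pi)$, pull a triangle inside $D$ by convexity, and exhibit a suitable $T_{v,\theta}(0)\subset D$ to contradict non-obtuseness at $x=0$. The only difference is that you spell out the "elementary geometric argument" the paper leaves implicit, by defining $\tau$ on the compact interval and taking $\rho=\min\tau>0$ (using $\psi'-\varphi'<\pi$ to keep $0$ off the opposite edge), which is exactly the content of the paper's unstated $\lambda$.
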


Suppose that $\psi-\varphi>\frac{\pi}{2}$. We pick $\varphi',\psi'$ with 
$\varphi<\varphi'<\psi'<\psi$ and such that $\psi'-\varphi'\in (\frac{\pi}{2},\pi)$. 
By definition of supremum and infimum, and since $I$ is an interval,  we know that $\varphi',\psi'\in I$, which implies that $e^{i\varphi'},e^{i\psi'}\in B$, which in turn implies that there exist $r>0$ and $s>0$
such that $r\cdot e^{i\varphi'}\in D$ and $s\cdot e^{i\psi'}\in D$. 
By convexity of $D$, the triangle given by $0,r\cdot e^{i\varphi'},s\cdot e^{i\psi'}$ is contained in $D$. An elementary geometric argument now shows that there exists a $\lambda>0$ such that
$T_{\lambda e^{i\varphi '},\psi'-\varphi'}(0)\subset D$. Since $\psi'-\varphi'>0$, this contradicts our choice of $x$.
This concludes the proof of the claim.

Now we consider the square that is given by the vertices $d\cdot e^{i(\varphi+k\cdot \frac{\pi}{2})}$, $k=0,1,2,3$. It is clear that (1) and (2) are satisfied. Thus it remains to prove the following claim.

\begin{claim}
All four vertices lie outside of the interior of $D$.
\end{claim}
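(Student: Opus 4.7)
The plan is to examine the four vertices $v_k := d\cdot e^{i(\varphi+k\pi/2)}$ (for $k=0,1,2,3$) one at a time, exploiting the bound $\psi-\varphi\leq\frac{\pi}{2}$ just established. Since $I\subset[\varphi,\varphi+\frac{\pi}{2}]$, the angles $\varphi+\pi$ and $\varphi+\frac{3\pi}{2}$ are not congruent modulo $2\pi$ to any element of $I$, so the directions $e^{i(\varphi+\pi)}$ and $e^{i(\varphi+3\pi/2)}$ do not belong to $B$. By the very definition of $B$, no positive scalar multiple of such a direction lies in $D$, so $v_2$ and $v_3$ are not in $D$ at all, let alone in $\operatorname{int}(D)$. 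This disposes of two of the four vertices immediately.

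The remaining two, $v_0$ and $v_1$, correspond to the endpoint directions $\varphi=\inf I$ and $\varphi+\frac{\pi}{2}\geq\psi=\sup I$. If $e^{i\varphi}\notin B$ (respectively $e^{i(\varphi+\pi/2)}\notin B$), then the same argument as above shows $v_0\notin D$ (respectively $v_1\notin D$) and we are done. Otherwise, I argue by contradiction: suppose $v_0=d\cdot e^{i\varphi}\in\operatorname{int}(D)$. Then for some $\epsilon\in(0,d)$ the open ball $B(v_0,\epsilon)$ is contained in $D$ and avoids the origin, so every point of this ball has a well-defined argument, and these arguments fill a connected open neighbourhood of $\varphi$. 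In particular there exists some $\rho\cdot e^{i\theta}\in D$ with $\rho>0$ and $\theta<\varphi$, forcing $e^{i\theta}\in B$ and hence producing an element of $I$ strictly smaller than $\inf I$, a contradiction. The case $k=1$ is symmetric, using $\sup I$ in place of $\inf I$.

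The main obstacle is precisely this endpoint case: one must rule out $v_0$ or $v_1$ actually lying in $\operatorname{int}(D)$ when its direction happens to be extremal in $B$. The small-ball/continuity-of-argument trick outlined above is the only place in the proof where the distinction between being in $D$ versus being in $\operatorname{int}(D)$ genuinely matters, and it is exactly what upgrades the infimum and supremum of $I$ into strictly extremal values of the set of admissible angles.
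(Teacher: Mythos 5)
Your proof is correct and uses essentially the same idea as the paper: the interior point would force an open arc of admissible directions around the vertex's angle, which is incompatible with $\varphi=\inf I$, $\psi=\sup I$, and $\psi-\varphi\le\frac{\pi}{2}$. The paper packages all four vertices into a single observation (a point $r\cdot e^{i\beta}\in\operatorname{int}(D)$ forces an open interval around $\beta$ into $I$), whereas you split into the two vertices whose directions lie entirely outside $B$ and the two endpoint directions handled by contradiction, but the substance is identical; the only small imprecision in your write-up is the phrase ``producing an element of $I$ strictly smaller than $\inf I$,'' since a priori $e^{i\theta}\in B$ only gives $\theta+2k\pi\in I$ for some integer $k$, and one should also note that $k\ne 0$ is impossible because $I$ has length at most $\frac{\pi}{2}$.
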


We start out with an observation: if $r\cdot e^{i\beta}$ lies in the interior of $D$ then, since the interior of $D$ is open, there exists an $\epsilon>0$ such that 
$r\cdot e^{i(\beta+\sigma)}\in D$ for all $\sigma\in (-\epsilon,\epsilon)$, which implies that $(\beta-\epsilon,\beta+\epsilon)\subset I$. 
The claim follows from this observation and the definition of infimum and supremum.
\end{proof}

\section{\label{sec:nontriv-implies-square-peg}Nontriviality implies a solution to the Square Peg Problem}
We can now give a proof of the Main Theorem~\ref{thm:square-peg-for-Jordan-curves}. The key idea behind the  argument we provide is in principle well-known, see e.g.\ \cite{Ma2014}. But note that the role of the obtuseness has not been elucidated before. 

\begin{proof}
Let $D\subset \R^2$ be a compact convex obtuse non-empty subset.
We need to show that there exist 
four points $a_1,a_2,a_3,a_4\in \partial D$ which form a square of side length $>0$.
After a translation we can assume that the origin $0$ is contained in the interior of $D$.
Given $x\in D\setminus \{0\}$ we set
\[ \rho(x)\,\,:=\,\, \sup\big\{ \|r\cdot x\|\,\big|\, r\in \R_{> 0}\mbox{ and } r\cdot x\in D\big\}\,\,\in\,\R_{>0}.\]
It follows from an elementary argument, see e.g.\ \cite[Chapter~11.3]{Be2009}, that $\rho$ is continuous. 
We consider the  map 
\[ \begin{array}{rcl} f\colon \R^2&\to & [0,1]\\
x&\mapsto & \left\{ \begin{array}{ll} 1-\|x\|\cdot \tmfrac{1}{\rho(x)}, &\mbox{ if }x\in D\setminus\{0\},\\ 
1,&\mbox{ if }x=0,\\
0,&\mbox{ if }x\not\in D.\end{array}\right.\end{array}\]
Since $\rho$ is continuous one can easily verify that $f$ is continuous.
 Since $D$ is obtuse we obtain from the Main Lemma~\ref{lem:main-lemma} that there exists a $d>0$ such that $D$ is $d$-non trivial.
By the Table Theorem~\ref{thm:Table-Theorem}  there exist four points $b_1,b_2,b_3,b_4\in \R^2$ with the following properties:
\begin{enumerate}
\item the points form a square of side length $d$,
\item the center of the square lies in $D$,
\item we have $f(b_1)=f(b_2)=f(b_3)=f(b_4)$.
\end{enumerate}
Since $D$ is $d$-non trivial, we see that at least one vertex lies in the interior of $D$. But since $f$ is non-zero on the interior of $D$, we see that the common $f$-value, let's call it $y$, lies in the open interval $(0,1)$. It follows immediately from the definition of $f$ that the four points $b_1,b_2,b_3,b_4$ lie on the subset
$(1-y)\cdot \partial D$. In other words, if we multiply these four points by $\frac{1}{1-y}$, then we obtain the desired four points $a_1,a_2,a_3,a_4$ on $\partial D$. 
\end{proof}

\begin{figure}[h]
\begin{center}
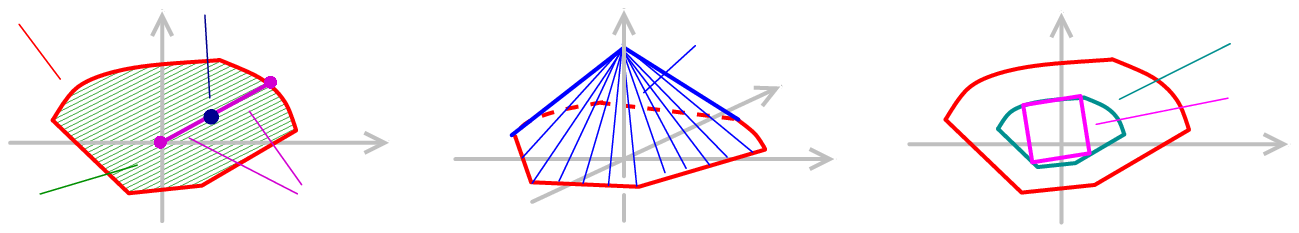
\end{center}
\end{figure}

\begin{acknowledgement*}
The first author was supported by the CRC 1085 ``Higher Invariants'' at the University of Regensburg.
The second author would like to acknowledge the Westminster College Gore Individual Summer Grant and the University of Regensburg for funding to support this collaboration.
\end{acknowledgement*}

\end{document}